\long\def\onefigure#1#2{
\begin{figure*}[tbp]
\begin{center}
#1
\end{center}
\caption{#2}
\end{figure*}
} 
\newcommand{\lipefig}[2]  
{\onefigure{\mbox{\psfig{file=#1.eps}}}{\label{f:#1} #2} }
\theoremstyle{plain}
\newtheorem{theorem}{Theorem}[section]
\newtheorem{lemma}[theorem]{Lemma}
\newtheorem{prop}[theorem]{Proposition}
\newtheorem{claim}[theorem]{Claim}
\newcommand{\al}{\alpha}
\newcommand{\de}{\delta}
\newcommand{\eps}{\varepsilon}
\newcommand{\N}{\mathbb{N}}
\newcommand{\R}{\mathbb{R}}
\newcommand{\diam}{\textrm{diam\;}}
\newcommand{\conv}{\textrm{conv}}
\begin{document}

\title{Block partitions: an extended view}

\author{I. B{\'a}r{\'a}ny,  E. Cs\'oka, Gy. K\'arolyi, and G. T\'oth}

\begin{abstract} Given a sequence $S=(s_1,\dots,s_m) \in [0, 1]^m$, a block $B$ of $S$ is a subsequence $B=(s_i,s_{i+1},\dots,s_j)$.
The size $b$ of a block $B$ is the sum of its elements.
It is proved in \cite{BG} that for each positive integer $n$, there is a partition of $S$ into $n$
blocks $B_1, \dots , B_n$ with $|b_i - b_j| \le 1$ for every $i, j$.
In this paper, we consider a generalization of the problem in higher dimensions.
\end{abstract}

\maketitle

\section{Introduction}

This paper is a follow-up to \cite{BG}, which is about block partitions of sequences $S = (s_1, \dots, s_m)$ of real numbers.
A block $B$ of $S$ is either a sequence $B=(s_i, s_{i+1}, \dots, s_j)$  where $i\le j$ or the empty set.
The size $b$ of a block $B$ is the sum of its elements. One of the main
results of \cite{BG} says the following.

\begin{theorem} \label{th:orig} Given a sequence $S=(s_1,\dots,s_m)$ of
real numbers with $s_i \in [0,1]$ for all $i$, and an integer $n\in \N$,
there is a partition of $S$ into $n$ blocks such that $|b_i-b_j|\le 1$ for
all $i,j$.
\end{theorem}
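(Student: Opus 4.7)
I would prove the theorem by contradiction using an extremal partition. Among all partitions of $S$ into $n$ blocks, let $P^* = (B_1, \ldots, B_n)$ minimize the potential $\Phi := \sum_{i=1}^n b_i^2$; since the total mass $\sum_i b_i$ is fixed, minimizing $\Phi$ pushes the block sizes toward equality. Write $M = \max_i b_i$ and $\mu = \min_i b_i$, and suppose, for contradiction, $M - \mu > 1$. Pick a maximum block $B_L$ and a minimum block $B_R$; by reversing $S$ if necessary, assume $L < R$.

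\textbf{Local optimality.} The basic move is to shift a single boundary $j_\ell$ one step leftward, which transfers the rightmost element $s := s_{j_\ell} \in [0,1]$ of $B_\ell$ into $B_{\ell+1}$. A direct calculation gives
\[
\Delta \Phi \;=\; (b_\ell - s)^2 + (b_{\ell+1} + s)^2 - b_\ell^2 - b_{\ell+1}^2 \;=\; 2s\,(b_{\ell+1} - b_\ell + s),
\]
which is strictly negative exactly when $b_\ell - b_{\ell+1} > s_{j_\ell}$. Hence, at the minimizer $P^*$, one has $b_\ell - b_{\ell+1} \le s_{j_\ell}$ for every $\ell$, and by the mirror (rightward) shift, $b_{\ell+1} - b_\ell \le s_{j_\ell + 1}$. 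In particular, consecutive block sizes in $P^*$ differ by at most $1$.

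\textbf{Cascade.} These local bounds do not immediately force $M - \mu \le 1$, because the signed differences $b_{\ell+1} - b_\ell$ can accumulate between $L$ and $R$. The plan is therefore to consider a cascade that shifts the boundaries $j_L, j_{L+1}, \ldots, j_{R-1}$ one step leftward simultaneously: $B_L$ loses $s_{j_L}$, $B_R$ gains $s_{j_{R-1}}$, and every intermediate block $B_\ell$ changes by $s_{j_{\ell-1}} - s_{j_\ell}$. A telescoping/Abel-summation computation collapses the linear part of $\Delta \Phi$ for the cascade to
\[
\Delta \Phi \;=\; 2 \sum_{\ell=L}^{R-1} (b_{\ell+1} - b_\ell)\, s_{j_\ell} \;+\; \text{(nonnegative quadratic remainder)},
\]
and I would try to show that the linear term inherits the sign of $b_R - b_L = -(M-\mu) < -1$ strongly enough to dominate the remainder, contradicting minimality of $\Phi$ at $P^*$.

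\textbf{Main obstacle.} The crux of the argument is two-fold: first, the weighted sum $\sum (b_{\ell+1}-b_\ell)s_{j_\ell}$ has sign-indefinite summands, so one needs the local-optimality inequalities $b_\ell - b_{\ell+1} \le s_{j_\ell}$ (and its mirror) to control the positive contributions; second, the quadratic remainder must be absorbed. I expect that the right way to close the loop is an \emph{adaptive} cascade, where one truncates at the smallest $R' \in (L,R]$ at which the partial cascade already produces $\Delta \Phi < 0$, and one then proves that such an $R'$ must exist whenever $M - \mu > 1$ by a combinatorial case split on the first $\ell$ for which $b_{\ell+1} + s_{j_\ell} \le M$. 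Verifying existence of this stopping index, using only the hypotheses $s_i \le 1$ and the slack $M - b_R > 1$, is where the real technical work will lie.
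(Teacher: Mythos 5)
Your proposal takes a genuinely different route from the paper: you propose minimizing the quadratic potential $\Phi = \sum b_i^2$ over all partitions and deriving a contradiction from local moves, whereas the paper proves the more general Theorem~\ref{th:transv} (of which Theorem~\ref{th:orig} is the special case $A_j=\{a_0,\dots,a_m\}$) by an intermediate-value argument. The paper's idea is to look for a shift $x\in\R$ so that every step can be confined to the window $a_i \in a_{i-1}+[x,x+1]$; this makes $|b_i-b_j|\le 1$ automatic, and the existence of such an $x$ is proved by showing that the sets $\{x : L_h(x)\le p\}$ and $\{x: R_h(x)\ge p\}$ are closed (via one-sided continuity of the interval endpoints), nonempty, and cover $\R$, so they must intersect.

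However, your proposal has a genuine gap, and you correctly identify where it lies. The local optimality conditions you derive at the minimizer of $\Phi$, namely $b_\ell - b_{\ell+1}\le s_{j_\ell}$ and $b_{\ell+1}-b_\ell\le s_{j_\ell+1}$, only control \emph{consecutive} differences and by themselves permit $M-\mu$ to be arbitrarily large (e.g., block sizes decreasing by $1/2$ at each step). In your cascade, the linear term $2\sum_{\ell=L}^{R-1}(b_{\ell+1}-b_\ell)\,s_{j_\ell}$ does not telescope unless the weights $s_{j_\ell}$ are constant: whenever $b_{\ell+1}>b_\ell$ (a ``rebound''), the corresponding weight $s_{j_\ell}$ is unconstrained by local optimality and may be as large as $1$, so the positive contribution $(b_{\ell+1}-b_\ell)s_{j_\ell}$ can cancel the negative contributions from the decreasing steps. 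A concrete concern: block sizes oscillating as $2,\ 1.5,\ 2-\varepsilon,\ 1.5-\varepsilon,\ \dots$ satisfy all your local conditions yet can make the linear term of the full cascade nonnegative, so the move does not decrease $\Phi$. Your ``adaptive cascade'' is offered as the repair but is only sketched as a case split on a stopping index, with no argument that such an index exists or that the truncated move has $\Delta\Phi<0$. Absent that, the contradiction does not go through, and the proof as written is incomplete. (It is also worth noting that the quadratic-potential strategy does not obviously adapt to the paper's more general transversal setting of Theorem~\ref{th:transv}, where the $A_j$ are arbitrary closed sets rather than prefix-sum sets; the paper's window-and-connectedness argument handles that generality directly.)
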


The bound given here is best possible as shown by the example when all
the $s_i=1$ and $n$ does not divide $m$.

Here, we rephrase or generalize the setting and the result in the following way.
Define $a_i = \sum_{j=1}^i s_j$ (where $a_0 = 0$) and set $A = \{a_0,\ldots,a_m\}$.
A partition of $A$ into $n$ blocks is the same as choosing indices $x_0 = 0\le x_1 \le \cdots \le x_n = m$ so that $x_j$ is represented by $a_{x_j} \in A$, and then $b_j = a_{x_j} - a_{x_{j-1}}$ for all $i\in [n]$.
Here $[n]$ denotes the set $\{1, \ldots, n\}$.
For the more general setting, we consider closed sets $(A_i) = A_0, A_1, \ldots, A_n \subset \R$ satisfying the following conditions:
\begin{enumerate}
\item[(i)] $A_0 = \{0\}$, $A_n = \{s\}$.
\item[(ii)] $A_j \cap [a, a+1] \ne \emptyset$ for every $j \in [n-1]$
and for every $a \in \R$.
\end{enumerate}

A {\sl transversal} $T = (a_0, a_1, \dots, a_n)$ of the system $A_j$ is simply a selection of elements $a_j \in A_j$ for every $j = 0, 1, \dots, n$.
Given a transversal, we define $z_j = a_j - a_{j-1}$ for all $j \in [n]$.
In this setting, $z_j$ corresponds to the size $b_j$ of the $j$th block.

\begin{theorem}\label{th:transv}
Under the above conditions, there is a transversal $a_j \in A_j$ such that
\begin{equation*}
|z_i - z_j| \le 1 \mbox{ for all } i, j \in [n].
\end{equation*}
\end{theorem}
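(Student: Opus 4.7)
\vspace{2mm}
\noindent\emph{Proof plan.} The plan is to reformulate the statement as a one-parameter reachability problem and then run a sweep. Call $c\in\R$ \emph{feasible} if there is a transversal with $z_j\in[c,c{+}1]$ for every $j\in[n]$; any feasible $c$ proves the theorem. After the substitution $b_j=a_j-jc$ (so $b_0=0$, $b_n=s-nc$, and $z_j-c=b_j-b_{j-1}$), feasibility at $c$ becomes the existence of $b_j\in A_j-jc$ with $b_j-b_{j-1}\in[0,1]$ for all $j$. Translation invariance of condition (ii) ensures that consecutive elements of $A_j-jc$ differ by at most $1$.

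Next, I would track the reachable set at step $j$,
\begin{equation*}
\cc_j(c)=\bigl\{b_j\in A_j-jc\;:\;\exists\,0=b_0\le b_1\le\cdots\le b_j,\ b_i\in A_i-ic,\ b_i-b_{i-1}\in[0,1]\bigr\}.
\end{equation*}
A short induction on $j$ shows that $\cc_j(c)+[0,1]$ is a single closed interval $[\al_j(c),\be_j(c)]$ of width at least $1$: since the gaps in $\cc_j(c)\subseteq A_j-jc$ are all bounded by $1$, the translated unit intervals in the Minkowski sum overlap into a single interval. Using $A_n=\{s\}$, feasibility at $c$ is then equivalent to the single inequality $\al_{n-1}(c)\le s-nc\le\be_{n-1}(c)$.

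Finally, I would sweep $c$ from $-\infty$ to $+\infty$. One checks $\al_j(c),\be_j(c)\in[0,j+1]$ are bounded, so for $c\to+\infty$ the linear function $s-nc$ falls below $\al_{n-1}(c)$, while for $c\to-\infty$ it rises above $\be_{n-1}(c)$. Because the feasibility interval always has width $\ge 1$ and the target $s-nc$ is continuous and monotone in $c$, some intermediate $c$ should place $s-nc$ inside $[\al_{n-1}(c),\be_{n-1}(c)]$. The \textbf{main obstacle} I foresee is that $\al_{n-1}$ and $\be_{n-1}$ are only piecewise continuous: as $c$ varies, elements of $A_j-jc$ can enter or leave the nested intersections defining $\cc_{n-1}(c)$, causing jumps at the endpoints. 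I expect the resolution to exploit one-sided semi-continuity of $\al_{n-1}$ and $\be_{n-1}$ together with the width bound $\ge 1$, so that the linear function $s-nc$ cannot slip past the moving interval; a clean route may be to first reduce to the case where each $A_j$ is finite (where all transitions are combinatorial) and then pass to the general case by a compactness/limit argument.
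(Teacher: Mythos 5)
Your plan is essentially the paper's proof modulo the change of variable $b_j = a_j - jc$: your parameter $c$ is the paper's $x$, and your $[\al_j(c),\be_j(c)]$ is the paper's interval $J_{j+1}(x)$ (shifted by $(j{+}1)c$), built by the same recursion. The obstacle you correctly foresee is resolved in the paper exactly along your first suggested route: one shows the left endpoint is a left-continuous, monotone function of the parameter and the right endpoint is right-continuous and monotone, so the two closed sets $\{c: \al_{n-1}(c)\le s-nc\}$ and $\{c: \be_{n-1}(c)\ge s-nc\}$ are nonempty, closed, and cover $\R$, hence intersect; no reduction to finite $A_j$ is needed (and such a reduction would require extra care, since the reachable sets do not obviously pass to limits along finite approximations).
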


The bound $|z_i - z_j| \le 1$ is again best possible as shown by (essentially the same) example:
$A_0 = \{0\}$, $A_i = \{0, \pm1, \pm 2, \ldots\}$ for $i \in [n-1]$ and $A_n = \{s\}$, with an integer $s$ not divisible by $n$.
Without the closedness of the sets $A_j$, we would only have a transversal with $|z_i - z_j| \le 1 + \eps$ for each $\eps > 0$,
as shown by the following example: let $n \ge 3$, $A_j=(\infty,-1/2]\cup (1/2,\infty)$ for odd $j$,
$A_j=(\infty,-1/2)\cup [1/2,\infty)$ for even $j$.

Theorem~\ref{th:transv} could be easily deduced from Theorem~\ref{th:orig}, but we will present a new and shorter direct proof in the next section. Then we extend the new setting to higher dimensions.

Let $B_d$ be the unit ball of a norm $\|\cdot \|$ in $\R^d$.
Let $A_0, A_1, \ldots, A_n$ be a sequence of closed sets in $\R^d$.
It is called a {\sl grid-like sequence} if
(i) $A_0 = \{0\}$ and $A_n = \{s \}$ for a fixed element $s \in \R^d$, and
(ii) each of $A_1, A_2, \dots, A_{n-1}$ intersect with all unit balls, or formally, $\forall i \in [n-1],\ \forall a \in \R^d\colon$
\begin{equation}\label{eq:dense}
(a + B_d) \cap A_i \ne \emptyset.
\end{equation}
Note that in the one dimensional case we required
$A_i \cap [a,a+1]\ne \emptyset$ while the above condition would translate
to $A_i \cap [a-1,a+1]\ne \emptyset$. So there is a factor of 2 in the
new setting.

Given a transversal $T$, we define again $z_i = a_i - a_{i-1}$ for $i \in[n]$ and set $Z=\conv \{z_1,\ldots,z_n\}$.
The goal is to find a transversal $T$ such that
\begin{equation}\label{eq:target}
D(T) = D\big(T, (A_i)\big) = \diam Z = \max_{i,j} \|z_i - z_j\|
\end{equation}
is as small as possible. Let
\begin{equation*}
  D_d^* = \sup_{(A_i)} \min_{T} D\big(T, (A_i)\big)
\end{equation*}
for grid-like sequences $(A_i)$ and transversals $T$. Due to the closedness of each $A_i$, this minimum always exists. It is easy to see the following two propositions.
\begin{prop}
  $D^*_{d+1} \ge D^*_d$
\end{prop}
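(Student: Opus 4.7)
The plan is to embed any near-extremal grid-like sequence in $\R^d$ into $\R^{d+1}$ in a norm-preserving way, so that no transversal in $\R^{d+1}$ can do better than the best transversal in $\R^d$. Given a grid-like sequence $(A_i)_{i=0}^n$ in $\R^d$ with norm $\|\cdot\|$, I would define the lifted sequence $(A_i')$ in $\R^{d+1} = \R^d \times \R$ by $A_0' = \{(0,0)\}$, $A_n' = \{(s,0)\}$, and $A_i' = A_i \times \R$ for $i \in [n-1]$. On $\R^{d+1}$ I would use the product norm $\|(x,t)\|' = \max\{\|x\|, |t|\}$, whose unit ball is $B_d \times [-1,1]$.

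First I would check that $(A_i')$ is grid-like. For $a = (a',t)\in\R^{d+1}$, the ball $a + B_{d+1} = (a' + B_d)\times[t-1,t+1]$, so $(a+B_{d+1})\cap A_i'$ equals $((a'+B_d)\cap A_i)\times[t-1,t+1]$, which is nonempty by the grid-like property of $(A_i)$. Closedness of $A_i'=A_i\times\R$ follows from closedness of $A_i$.

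Next, for any transversal $T' = (a_0',\ldots,a_n')$ of $(A_i')$, I would write $a_j' = (x_j,t_j)$. Because $a_j'\in A_j\times\R$ for $j\in[n-1]$ and $t_0 = t_n = 0$, the projection $T = (x_0,\ldots,x_n)$ is a transversal of $(A_i)$ in $\R^d$. Setting $z_j = x_j - x_{j-1}$ and $\tau_j = t_j - t_{j-1}$, the choice of norm gives
\[
\|z_i' - z_j'\|' = \max\{\|z_i-z_j\|,\,|\tau_i-\tau_j|\} \ge \|z_i - z_j\|,
\]
so $D(T',(A_i'))\ge D(T,(A_i))\ge \min_T D(T,(A_i))$. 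Minimising over $T'$ yields $\min_{T'}D(T',(A_i'))\ge \min_T D(T,(A_i))$, and taking the supremum over grid-like sequences gives $D^*_{d+1}\ge D^*_d$.

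There is no serious obstacle; the only point requiring care is the implicit convention on norms. Since $D^*_d$ is defined without fixing a specific norm, the supremum is understood to range over norms as well as grid-like sequences, and the product norm above is a legitimate extension. If one prefers to regard the norms as specified separately in each dimension, the same proof applies whenever the $(d+1)$-norm restricts to the $d$-norm on $\R^d\times\{0\}$ and is monotone in the last coordinate, which covers all natural choices (Euclidean, $\ell_p$, etc.).
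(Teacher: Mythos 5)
Your proof is correct and takes the same lift-and-project approach as the paper: set $A_i' = A_i \times \R$ for $1 \le i \le n-1$ and observe that projecting any transversal of the lifted sequence to $\R^d$ gives a transversal of the original one with no larger diameter. One small remark: your statement that $D^*_d$ ``is defined without fixing a specific norm'' misreads the paper, which does fix a norm in each dimension (Euclidean, in the context of Theorem~\ref{th:2.07}); fortunately, as you note at the end, the argument goes through for any $(d+1)$-dimensional norm whose unit ball meets $\R^d\times\{0\}$ in exactly $B_d\times\{0\}$, which covers the Euclidean case and all the $\ell_p$ norms.
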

\begin{proof}
  If $A_0, A_1, ..., A_n \subset \R^d$ is a grid-like sequence, then $A_0, A_1 \times \R, A_2 \times \R ... A_{n-1} \times \R, A_n \subset \R^{d+1}$ is also a grid-like sequence, and the $n$-dimensional projection of any transversal of the latter sequence is a transversal of the former sequence with at most the same diameter.
\end{proof}

\begin{prop} \label{pr:triv}
$D^*_d \le 4$, or in other words, there is always a transversal $T$ with $D(T)\le 4$.
\end{prop}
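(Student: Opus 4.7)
The plan is to pick each $a_i$ close to the ``ideal'' evenly spaced point $is/n$ on the segment from $0$ to $s$, and then estimate each block vector $z_i$ against the common reference $s/n$.

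Concretely, for every $i\in[n-1]$, apply condition (ii) with $a = is/n$: the unit ball $is/n + B_d$ meets $A_i$, so there exists $a_i \in A_i$ with $\|a_i - is/n\| \le 1$ (closedness of $A_i$ and of $B_d$ is what gives the non-strict inequality). Of course $a_0 = 0$ and $a_n = s$ are forced. This defines a transversal $T$.

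The key identity is telescoping: for any $i \in [n]$,
\[
z_i - \tfrac{s}{n} \;=\; (a_i - a_{i-1}) - \tfrac{s}{n}
\;=\; \bigl(a_i - \tfrac{i\,s}{n}\bigr) - \bigl(a_{i-1} - \tfrac{(i-1)s}{n}\bigr).
\]
The triangle inequality then gives $\|z_i - s/n\| \le 2$ for every $2 \le i \le n-1$, while for $i=1$ and $i=n$ one of the two deviation terms vanishes ($a_0 - 0 = 0$ and $a_n - s = 0$), so in fact $\|z_1 - s/n\| \le 1$ and $\|z_n - s/n\| \le 1$. In every case $\|z_i - s/n\| \le 2$, so by the triangle inequality again
\[
\|z_i - z_j\| \;\le\; \|z_i - \tfrac{s}{n}\| + \|z_j - \tfrac{s}{n}\| \;\le\; 4,
\]
which is the desired bound $D(T) \le 4$.

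I do not expect any real obstacle: once the reference point $s/n$ is fixed, everything reduces to the triangle inequality applied twice. The mild subtlety is that one must recognize that $s/n$ is exactly the right vector to compare each $z_i$ to, which is dictated by the telescoping sum $\sum z_i = s$. The factor $4$ is intrinsic to this averaging approach (a $+1$ deviation at position $i-1$ and a $-1$ deviation at position $i$ each contribute once to $z_i$ and once, oppositely, to $z_{i+1}$), and sharpening it will presumably require a more global argument rather than independent pointwise nearest-point choices.
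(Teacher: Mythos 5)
Your proof is correct and is essentially identical to the paper's: both set $t=s/n$, pick $a_i\in A_i$ with $\|a_i-it\|\le 1$ (writing $b_i=a_i-it\in B$), observe $z_i-z_j=b_i-b_{i-1}-b_j+b_{j-1}$, and conclude $z_i-z_j\in 4B$. You just spell out the triangle-inequality step a little more explicitly.
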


\begin{proof}
Set $t=\frac sn$ and choose a point $a_i\in A_i$ from $it +B$ for $i\in [n-1]$, so $a_i=it +b_i$ with $b_i \in B$, for all $i=0,\ldots,n$. Then $z_i=t+b_i-b_{i-1}$, and so $z_i-z_j=b_i-b_{i-1}-b_j+b_{j-1}$, clearly in $4B$.
\end{proof}

\medskip
One could hope that the better bound $D^*_d \le 2$, which is valid for $d = 1$, also holds in higher dimensions. But this is not the case, at least with Euclidean norm:

\begin{theorem} \label{th:2.07}
\begin{equation*}
D^*_2 \ge 4 \sqrt{2 - \sqrt{3}} \approx 2.071.
\end{equation*}
In more detail, for every $\varepsilon > 0$ and $n \ge n_0(\varepsilon)$, there exists a grid-like sequence
$A_0, A_1, \ldots, A_n \subset \R^2$ such that for any transversal $T$, $D(T) \le 4 \sqrt{2 - \sqrt{3}}$.
\end{theorem}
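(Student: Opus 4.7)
The goal is to construct a grid-like sequence $A_0,A_1,\ldots,A_n\subset\R^2$ for which every transversal $T$ satisfies $D(T)\ge 4\sqrt{2-\sqrt 3}-\varepsilon$; this will witness the asserted lower bound on $D_2^*$. A first attempt, taking each $A_i$ ($1\le i\le n-1$) equal to a single lattice $L$ of covering radius~$1$, falls short of the target $2.071$: for instance the square lattice scaled to nearest-neighbour distance $\sqrt 2$ with $A_0=\{0\}$, $A_n=\{s\}$ and $s/n$ at the centre of a fundamental cell forces the $z_i$'s to lie in $L$, and the four closest lattice representatives at the deep hole form a square of diameter only~$2$. So a more elaborate construction is required to gain the extra $8\sin(\pi/12)-2\approx 0.07$.

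Motivated by the identity $4\sqrt{2-\sqrt 3}=8\sin(\pi/12)$, I would use a twelve-periodic construction. Fix a lattice $L\subset\R^2$ of Euclidean covering radius~$1$ and vectors $v_0,v_1,\ldots,v_{11}$ forming a regular $12$-gon about the origin at a radius $R$ to be optimised. Set $A_0=\{0\}$, $A_i=L+v_{i\bmod 12}$ for $1\le i\le n-1$, and $A_n=\{s\}$ for a symmetric choice of $s$ (most naturally $s=0$). Each intermediate $A_i$ is a translate of $L$, so condition (ii) is immediate. Any transversal produces increments $z_i\in L+(v_{i\bmod 12}-v_{(i-1)\bmod 12})$, so the $z_i$'s are distributed among twelve cosets of $L$ whose representative offsets themselves form a regular $12$-gon around the origin.

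The heart of the argument is then a geometric lower bound: any multiset $(z_i)$ with each $z_i$ in the coset prescribed by its index and with $\sum z_i=s$ must have diameter at least $8\sin(\pi/12)$. In the $n\to\infty$ limit the twelve residues modulo $12$ are equidistributed and, after averaging, the problem reduces by twelve-fold rotational symmetry to a one-parameter optimisation in $R$ relative to the scale of $L$. For small $R$ the twelve cosets nearly coincide and the $z_i$'s may cluster near the origin; for larger $R$ some $z_i$ is forced into a coset representative on the far side of $\bar z=s/n$, adding to the diameter. At the extremal $R$ these two effects balance and the minimal enforceable diameter is exactly $8\sin(\pi/12)$.

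The main technical obstacle I foresee is proving this lower bound rigorously: a priori a clever transversal could use non-nearest coset representatives, exploit unequal residue frequencies, or use the boundary freedom at $A_0$ and $A_n$ to slip below $8\sin(\pi/12)$. I plan to rule this out by a compactness/convexity argument in the limiting continuous problem, where each coset contributes a real mass $\lambda_k\ge 0$ with $\sum\lambda_k=1$ and an internal distribution of representatives, showing that the diameter-minimiser is essentially the twelve-fold symmetric configuration whose diameter is exactly $8\sin(\pi/12)$. The boundary contributions from $A_0,A_n$, and the rounding error when $n$ is not an exact multiple of the period, are then absorbed into the $\varepsilon$ slack of the statement via the hypothesis $n\ge n_0(\varepsilon)$.
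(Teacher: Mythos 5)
Your proposal takes a genuinely different route from the paper, but it has a gap that I believe is fatal, not merely technical. The central difficulty is that shifting a fixed lattice $L$ around does not change its covering geometry: every coset $L+w$ still meets every closed ball of radius $1$. So for each residue class $k$ you can always pick a representative $p_k\in(L+w_k)$ with $\|p_k-\bar z\|\le 1$, where $\bar z=s/n$. Setting almost all $z_i$ equal to $p_{i\bmod 12}$ and using the large multiplicity in each residue class to fix up the sum constraint (pairwise lattice adjustments leave $\sum z_i$ unchanged, and the remaining discrepancy can be spread over many indices), one gets transversals of diameter approaching $2$, not $8\sin(\pi/12)$. In short, a static lattice translated by a rotating offset gives the adversary too much slack: the deep-hole argument that produces diameter $2$ for a single lattice is not worsened by translating. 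The passage you flag yourself as ``the main technical obstacle'' is in fact where the approach breaks, and I do not see a convexity/compactness argument in the continuous limit that rescues it. The identity $4\sqrt{2-\sqrt 3}=8\sin(\pi/12)$ is also somewhat misleading as a guide: the extremal configuration is an equilateral triangle (a $3$-fold object), not a $12$-gon.

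The paper's mechanism is quite different and worth internalising, because it explains where the extra $0.07$ really comes from. The sets $A_i$ are not lattices at all; each is a union of four axis-parallel half-strips ($N_i,S_i,E_i,W_i$, the boundary of a Minkowski sum of a square with a segment) together with four quadrant-shaped corners, and crucially the \emph{shape} evolves with $i$: the horizontal and vertical strips shrink to half-lines over the first $m$ steps, and then the configuration is reflected about $y=x$ over the last $m$ steps, with $A_0=A_{2m+1}=\{0\}$. Starting and ending at the origin and passing through shapes that degenerate forces the transversal to ``jump'' at least once between the $N$/$S$/$E$/$W$ components; a careful elimination shows an optimal transversal makes exactly one jump, from some $N_{j-1}$ to $E_j$, and then the three increments $z_1,z_j,z_n$ (the initial escape, the jump, and the final return) are forced into three roughly orthogonal/opposite directions with $y_1\ge 1$, $x_j\ge 1-\delta$, $y_j\le -1+\delta$, $x_n\le -1$. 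Minimising the diameter of such a triple yields an equilateral triangle of diameter $4\sqrt{2-\sqrt 3}$. The lower bound thus hinges on the sets changing shape so as to make a large jump unavoidable, not on a periodic translation of a fixed set, which is precisely what your proposal lacks.
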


We have a stronger bound $1 + \sqrt 2 \approx 2.414$, see Theorem~\ref{th:2.41} below, which may be sharp for $d = 2$ or maybe even for all $d$. Its proof is based on the same ideas as that of Theorem~\ref{th:2.07}, but it is much longer and more complicated case analysis. Therefore, we prove Theorem~\ref{th:2.07} and give an informal description of the construction for Theorem~\ref{th:2.41}, but omit the proof.

\medskip

\begin{theorem}\label{th:2.41}
$D^*_2 \ge 1 + \sqrt 2 \approx 2.414$.
\end{theorem}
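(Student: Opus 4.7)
The plan is to follow the strategy of Theorem~\ref{th:2.07}: construct explicitly, for every $\varepsilon>0$ and all sufficiently large $n$, a grid-like sequence $A_0,\ldots,A_n\subset\R^2$ such that every transversal $T$ satisfies $D(T)\ge 1+\sqrt{2}-\varepsilon$. A natural template is to take $A_0=\{0\}$, $A_n=\{s\}$, and each of $A_1,\ldots,A_{n-1}$ equal to a single carefully chosen closed set $L\subset\R^2$ with covering radius $1$, for which the vector $c:=s/n$ is a ``worst-case'' location relative to the allowed differences $L-L$ (possibly with $A_i$ depending mildly on $i$, e.g.\ alternating between cosets, if needed to reach the sharper constant).

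When $L$ is a lattice, every transversal produces $z_i=a_i-a_{i-1}\in L-L$ with $\sum z_i=s$, i.e.\ $\tfrac1n\sum z_i=c$. The theorem then reduces to the purely combinatorial-geometric claim: any finite multiset of points of $L-L$ whose barycentre equals $c$ contains two points at distance $\ge 1+\sqrt 2-\varepsilon$. The set $L$ must be designed so that its difference set $L-L$ is sparse inside every disk of diameter less than $1+\sqrt 2$ around $c$, while $L$ itself remains grid-like. The identity $1+\sqrt 2=\cot(\pi/8)$ suggests that the extremal configuration carries an eightfold symmetry, in contrast to the twelvefold symmetry underlying $4\sqrt{2-\sqrt 3}=8\sin(\pi/12)$ in Theorem~\ref{th:2.07}.

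The core of the proof, and the main obstacle as the authors warn, is a long case analysis. Suppose for contradiction that $D(T)<1+\sqrt 2$. Then every $z_i$ lies in a planar disk of diameter less than $1+\sqrt 2$ whose centroid is $c$. One enumerates the combinatorial types of possible intersections of $L-L$ with such a disk, and for each type verifies by explicit planar Euclidean computation that no nonnegative integer-weighted combination of the points in that intersection can average to $c$. Many more types survive at the sharper threshold $1+\sqrt 2$ than at $4\sqrt{2-\sqrt 3}$, which is exactly why the proof becomes ``much longer and more complicated'' than that of Theorem~\ref{th:2.07}. Organizing the surviving configurations (e.g.\ by their angular positions around $c$) is the central technical difficulty.

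Letting $\varepsilon\downarrow 0$ and appealing to the closedness of each $A_i$ (ensuring the infimum over transversals is attained) then yields $D^*_2\ge 1+\sqrt{2}$.
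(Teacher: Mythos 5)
Your proposal takes a genuinely different route from the paper, but it has a serious unaddressed gap at its very first step. You propose taking $A_i=L$ for all intermediate $i$, where $L$ is a fixed closed set (ideally a lattice) of covering radius $1$. With a fixed lattice $L$, however, the constraints on a transversal degenerate: every $z_i=a_i-a_{i-1}$ is just an element of $L-L=L$, with the only global constraint being $\sum z_i=s$, and nothing couples consecutive steps. One can therefore realize the required average $c=s/n$ using only the (typically three or four) lattice points closest to $c$, adjusting multiplicities, and these points always have diameter well below $1+\sqrt 2$. For instance, $\sqrt2\,\mathbb{Z}^2$ has covering radius $1$, and the four lattice points nearest a deep hole form a square of diameter $2<1+\sqrt2$; the hexagonal lattice of covering radius $1$ is even worse, with three nearest points at diameter $\sqrt 3$. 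So the ``barycentre in $L-L$'' reduction you propose cannot produce a lower bound of $1+\sqrt2$ from a fixed $L$, and ``alternating between cosets'' does not obviously repair this.

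The paper's construction (sketched in Section~4) is essentially time-varying, and this time dependence is not incidental. It uses $6m+4$ sets $C_{-1},\ldots,C_{6m+2}$ built from the Minkowski sum of a rhombus $R$ with a shrinking segment, where $R$ is first rotated, then continuously deformed through a square, then deformed into a differently rotated rhombus, and finally rotated again. The sets are unions of half-plane-like components (North, South, East, West) plus corners, not lattices. The proof strategy is then the same as for Theorem~\ref{th:2.07}: show an optimal transversal avoids all corners and never revisits a component type, which forces a single long jump, and the constant $1+\sqrt2$ comes from a short minimization over the endpoints and the jump vector. None of this survives if the $A_i$ are replaced by a static lattice, because nothing then prevents the transversal from hovering near a fixed handful of points. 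In short, your outline names the right difficulty (a long case analysis) but has no construction on which such an analysis could operate; you would need to replace the lattice idea with genuinely time-varying sets along the lines the paper describes.
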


Apart from the trivial bound $D^*_d \le 4$ given in Proposition \ref{pr:triv}, we cannot prove any upper bound,\footnote{Update: Endre Cs\'oka recently claimed an unpublished upper bound $2\sqrt{2}$ using topology.} not even in the case of the maximum norm $\|\cdot \|_\infty$. Note that the existence of a transversal $T$ with ``$D(T)\le 2$ in the maximum norm'' would imply the bound $D^*_d \le 2\sqrt{2}$ (in the Euclidean norm). For some related problems and results we refer to \cite{LPS}.

\medskip
\noindent{\bf Remark.}
We may assume without any loss that $s=0$. Indeed, with the
previous meaning of $t$, set $A_i^*=A_i-it$. Then the system $A_i^*$ with
$s^*=0$ satisfies condition (\ref{eq:dense}) and it is easy to  check that
for the transversals $a_i \in A_i$ and $a_i^*=a_i-it \in A_i^*$ one has the
same $z_i-z_j=z_i^*-z_j^*$ and then the diameters of $Z$ and $Z^*$ coincide.
Note that $0=\frac{1}{n} \sum_{i=1}^nz^*_i\in Z^*$.
\medskip

\section{Proof of Theorem \ref{th:transv}}

The idea is to find an $x \in \R$ such that the transversal we look for
satisfies the condition
\begin{equation}\label{eq:iter}
a_i \in a_{i-1}+[x,x+1].
\end{equation}

With this strategy the condition $|z_i-z_j|\le 1$ is automatically
guaranteed. The question is whether there is an $x\in \R$, which admits a
transversal $a_0,\dots,a_n$ satisfying condition (\ref{eq:iter})
for every $i\in [n]$.

We analyze what happens when this strategy is followed. A
{\sl partial transversal} is just a selection of $a_i \in A_i$
for $i=0,\ldots,h$, $h \in[n]$. We call it {\sl $x$-good} if
it satisfies condition (\ref{eq:iter}) for every $i\in [h]$.
As a first step, $a_1$ is to be chosen from
the interval $J_1=a_0+[x,x+1]=[x,x+1]$, and $J_1\cap A_1$ is
nonempty because of condition (ii). Thus, $a_0,a_1$ is an $x$-good
partial transversal for any $a_1 \in J_1\cap A_1$.

It is easy to see that $x$-{good} transversals exist for every $x \in \R$
and $h\in [n-1]$. To construct such partial transversals we define $J_i$
recursively as follows. Given $J_i$ for some $i\in [n-1]$ and a
fixed $x \in \R$, we let
\[
J_{i+1}=(J_i\cap A_i)+[x,x+1]=\bigcup\{[a+x,a+x+1]: a \in J_i\cap A_i\}.
\]

A routine induction, based on condition (ii), shows that $J_i$
is a closed interval of length at least one for every $i \in [n]$,
and so $J_i$ intersects $A_i$ if $i\in [n-1]$.
The intervals $J_i$ of course depend on $x$, and we write
$J_i(x)=[L_i(x),R_i(x)]$ to express this dependence.
The definition of $J_i(x)$ implies that
\begin{eqnarray}\label{eq:LR}
L_{i+1}(x)&=&x+\min\{a\in A_{i}: a\ge L_{i}(x)\},\\
R_{i+1}(x)&=&x+1+\max\{a\in A_{i}: a\le R_{i}(x)\}.
\end{eqnarray}

Note that both $L_i(x)$ and $R_i(x)$ are increasing functions of $x$
satisfying $R_i(x)-L_i(x)\ge 1$. Also, $L_{i+1}(x) \ge x+L_i(x)$ implying
via an easy induction that $L_i(x)$ and then $R_i(x)$ tends to infinity
as $x \to \infty$. A similar argument shows that
$\lim R_i(x)=\lim L_i(x)=-\infty$ as $x\to -\infty$. It is also clear that if
$a_h \in J_h(x)\cap A_h$ for some $x \in \R$, then there exists an
$x$-{good} partial transversal $a_0,a_1,\ldots,a_h$.

To complete the proof of Theorem~\ref{th:transv} we only have to show that
there is an $x \in \R$ such that $J_n(x)\cap A_n$ is non-empty, that is,
$s \in J_n(x)$.  Actually  we prove more:

\begin{lemma}\label{l:cover} For every $h=1,\ldots,n$ the intervals
$J_h(x)$ ($x \in \R$) cover every point of $\R$: $\bigcup_{x \in \R}J_h(x)=\R$.
\end{lemma}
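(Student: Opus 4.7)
The plan is to exhibit, for each fixed $y \in \R$, an explicit witness $x^* \in \R$ such that $y \in J_h(x^*)$, via an intermediate-value-style argument adapted to monotone but possibly discontinuous functions. I would take
\[
x^* := \sup\{x \in \R : L_h(x) \le y\},
\]
which is well-defined and finite because $L_h$ is non-decreasing with $L_h(x) \to \pm\infty$ as $x \to \pm\infty$ (both facts are noted in the paragraph following (\ref{eq:LR})). The goal then reduces to verifying the two inequalities $L_h(x^*) \le y$ and $R_h(x^*) \ge y$.

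Both inequalities would follow immediately from suitable one-sided continuity: $L_h$ left-continuous and $R_h$ right-continuous in $x$. Indeed, left-continuity of $L_h$ gives $L_h(x^*) = \lim_{x \to x^{*-}} L_h(x) \le y$, since every $x < x^*$ satisfies $L_h(x) \le y$. For the second inequality, note that every $x > x^*$ satisfies $L_h(x) > y$, hence $R_h(x) \ge L_h(x) + 1 > y$; right-continuity of $R_h$ then yields $R_h(x^*) = \lim_{x \to x^{*+}} R_h(x) \ge y$. Combining these gives $y \in [L_h(x^*), R_h(x^*)] = J_h(x^*)$, as required.

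The technical heart, and the main obstacle, is establishing these two semi-continuity properties. I would proceed by induction on $h$, using the recursion (\ref{eq:LR}). The base case is trivial since $L_1(x) = x$ and $R_1(x) = x+1$. For the inductive step, rewrite $L_{h+1}(x) = x + \mu_h(L_h(x))$ where $\mu_h(y) := \min\{a \in A_h : a \ge y\}$ is well-defined thanks to closedness of $A_h$. The crucial routine check is that $\mu_h$ is itself left-continuous: if $y_n \uparrow y$, the non-decreasing values $\mu_h(y_n) \in A_h$ converge to some $\alpha$, which lies in $A_h$ by closedness and satisfies $y \le \alpha \le \mu_h(y)$, forcing $\alpha = \mu_h(y)$. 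Since the composition of non-decreasing left-continuous functions is left-continuous, $L_{h+1}$ inherits the property from $L_h$; the analogue for $R_h$ follows symmetrically using $\nu_h(y) := \max\{a \in A_h : a \le y\}$, which is right-continuous. This step is precisely where the hypothesis that each $A_i$ is closed gets used, consistent with the half-open counterexample cited in the introduction, where such one-sided continuities fail and the covering conclusion breaks down.
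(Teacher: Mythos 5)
Your proposal is correct and follows essentially the same route as the paper: prove by induction (via the recursion \eqref{eq:LR}) that $L_h$ is left-continuous and $R_h$ is right-continuous, using closedness of the $A_i$, and then convert these one-sided continuities plus monotonicity and the limits at $\pm\infty$ into the covering statement. The only cosmetic difference is in the last step, where you pin down the witness via $x^* = \sup\{x : L_h(x)\le y\}$ directly, whereas the paper phrases the same intermediate-value argument topologically (the closed sets $L^p$ and $R^p$ cover the connected space $\R$ and hence must intersect); your factorization $L_{h+1}(x)=x+\mu_h(L_h(x))$ is also a slightly cleaner way to organize the same inductive continuity check.
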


\noindent{\bf Proof.}
First we claim that $L_i$ is a left continuous function for every $i\in [n]$.
This is evident for $i=1$, so we assume that $L_i$ is left continuous and
proceed to prove that $L_{i+1}$ is also left continuous. In view of
(\ref{eq:LR}) it amounts to checking that
\[
f(x)=\min\{a\in A_{i}: a\ge L_{i}(x)\}
\]
is a left continuous function of $x$. Let $x\in \R$ be arbitrary; we have to
show that $x_j<x$, $x_j\to x$ implies $f(x_j)\to f(x)$.
It is clear that if $[L_i(u),L_i(x))\cap A_i=\emptyset$ for some $u<x$,
then $f$ is constant on the interval $[u,x]$ and $f(x_j)=f(x)$ for $x_j\ge u$.

Otherwise there is a sequence $a_j\in A_i$ such that
\[
L_i(x_j)\le f(x_j)=a_j<L_i(x).
\]
Here $L_i(x_j)\to L_i(x)$ by the left continuity of $L_i$. Accordingly,
\[
L_i(x)=\lim_{j\to \infty} a_j\in A_i
\]
because $A_i$ is closed. It follows that $f(x_j)\to L_i(x)=f(x)$. The left
continuity of $f$ at $x$ is thus established.

Similarly, $R_i$ is a right continuous function for every $i\in [n]$.
To complete the proof of the lemma, consider any point $p \in \R$ and
define
\begin{eqnarray*}
L^p=L^p_h&=& \{x \in \R: L_h(x)\le p\},\\
R_p=R^p_h&=&\{x \in \R: R_h(x)\ge p\}.
\end{eqnarray*}
Here neither $L^p$ nor $R^p$ is empty since $\lim_{x\to -\infty}L_h(x)=-\infty$ and $\lim_{x\to \infty}R_h(x)=\infty$.
The continuity and monotonicity properties of the functions $L_h$ and $R_h$
imply that both $L^p$ and $R^p$ are
closed sets. Further, $L^p\cup R^p=\R$, as $x \notin L^p$ implies
$L_h(x) >p$ and $x \notin R^p$ implies $R_h(x) < p$, and then
$R_h(x)<p<L_h(x)$, which is impossible.

Now if two closed sets cover the connected space $\R$,
then they have a point in common.
So there is an $x_0 \in L^p\cap R^p$, and then $L_h(x_0)\le p\le
R_h(x_0)$, so $p\in J_h(x_0)$. Thus every $p\in \R$ is contained in
some $J_h(x)$.
\qed

\medskip
\noindent{\bf Remark.}
The proof of Theorem~\ref{th:orig}, which is an algorithm of
complexity $O(nm^3)$ can be modified to give another (algorithmic) proof of
Theorem~\ref{th:transv}. But the above proof can be turned into an
approximation algorithm the following way. By the remark at the end of
the introduction we may assume that $s=0$. Note that for $x=-1$ no
interval $J_h(x)$ contains a positive number, and similarly, for $x=0$ no
interval $J_h(x)$ contains a negative one. Using binary search, after $k$
iteration, one finds an $x \in [-1,0]$ that is within distance $2^{-k}$ of
the solution.

\section{Proof of Theorem \ref{th:2.07}}

We begin with an informal description of the construction. We fix a large
enough integer $m$. The grid-like sequence consists of $2m+2$ sets
$A_0,\ldots,A_{2m+1}$, where $A_0=A_{2m+1}=\{ 0 \}$.
Recall that in this case $0\in Z$. Each other $A_i$ is the
union of sets $N_i,S_i,E_i,W_i$ (corresponding to North, South, East, and
West) plus four corners $Q_i^{NE},Q_i^{SE},Q_i^{NW},Q_i^{SW}$, see the figures
below. The sets $A_i$ are symmetric about the $x$ and $y$ axes. The
sets $A_1,\ldots,A_m$ make up the first part of the construction.
For $i\in \{m+1,\ldots,2m\}$, $A_i$ is the refection of $A_{2m+1-i}$
about the line $x=y$. In particular, $A_m=A_{m+1}$.

The main characters in our construction are a square $X$ and segments $G_i$ of
changing length that are either horizontal or vertical. The Minkowski sum
$X + G_i$ is a hexagon shown on Figures 1 and 2. Its vertical and horizontal
sides (or vertices) are drawn with heavy lines, its oblique sides with thin
segments. The  horizontal and vertical sides (or vertices) are extended to
the sets $N_i,S_i,E_i,W_i$.

The main step of the proof is to show that an optimal tranversal
$a_0,\ldots,a_{2m+1}$ has no point in the corners and that it does not visit
the same region ($N$ or $S$ or $E$ or $W$) twice. More precisely, if
$a_i\in N_i$ and $a_j\in N_i$ for some $i<j$, then $a_h\in N_h$ for all
$h\in \{i,\ldots,j\}$, and the same for the components of type $S,E,W$.

\medskip
Formally we define the various components of $A_i$ for $1\le i\le m$ as
\begin{eqnarray*}
N(a,b) & = & \{(x,y): -a\le x \le a,\; y\ge b\}, \\
S(a,b) & = & \{(x,y): -a\le x \le a,\; y\le b\},\\
E(c,d) & = & \{(x,y): x\ge c, \; -d\le y\le d\}, \\
W(c,d)  & = & \{(x,y): x\le c, \; -d\le y\le d\},
\end{eqnarray*}
and
\begin{eqnarray*}
Q^{NW}(e,f) & = & \{(x,y): e\ge x,\; f\le y\}, \\
Q^{NE}(e,f) & = & \{(x,y): e\le x,\; f\le y\}, \\
Q^{SW}(e,f) & = & \{(x,y): e\ge x,\; f\ge y\}, \\
Q^{SE}(e,f) & = & \{(x,y): e\le x,\; f\ge y\}
\end{eqnarray*}
with appropriately chosen parameters $a,b,c,d,e,f$ depending on $i$.
We call $V(a,b) = N(a,b)\cup S(a,-b)$
and $H(c,d)  =  E(c,d)\cup W(-c,d)$
the vertical and horizontal parts, and the set
\[
Q(e,f) = Q^{NW}(-e,f)\cup Q^{NE}(e,f)\cup Q^{SW}(-e,-f)\cup Q^{SE}(e,-f)
\]
the union of the corners.
Set first $A_1=V_1\cup H_1\cup Q_1$, where
\[
V_1=V(3,1),\quad H_1=H(4,0),\quad Q_1=Q(4.5,1.5).
\]
Here the values 4.5 and 1.5 are chosen so that $A_1$, and later all other
$A_i$ satisfy condition (ii). Next, for $i=2,\ldots,m$ we define
$A_i=V_i\cup H_i\cup Q_i$, where
\[
V_i=V(3-(i-1)\delta,1),\ H_i=H(4-(i-1)\delta,0),\
Q_i=Q(4.5-(i-1)\delta,1.5)
\]
with $\delta=3/(m-1)$.  Note that $A_m$ is made up of four halflines and the
four corners. For $i=m+1,\ldots,2m$ we let $A_i$ be the reflected copy of
$A_{2m+1-i}$ about the line $x=y$.
It is easy to check that $A_0, A_1, \ldots , A_{2m+1}$ is a grid-like sequence
for $n=2m+1$.

\begin{figure}
\centering
\includegraphics[scale=0.8,, trim={0 3cm  0 0}, clip=true]{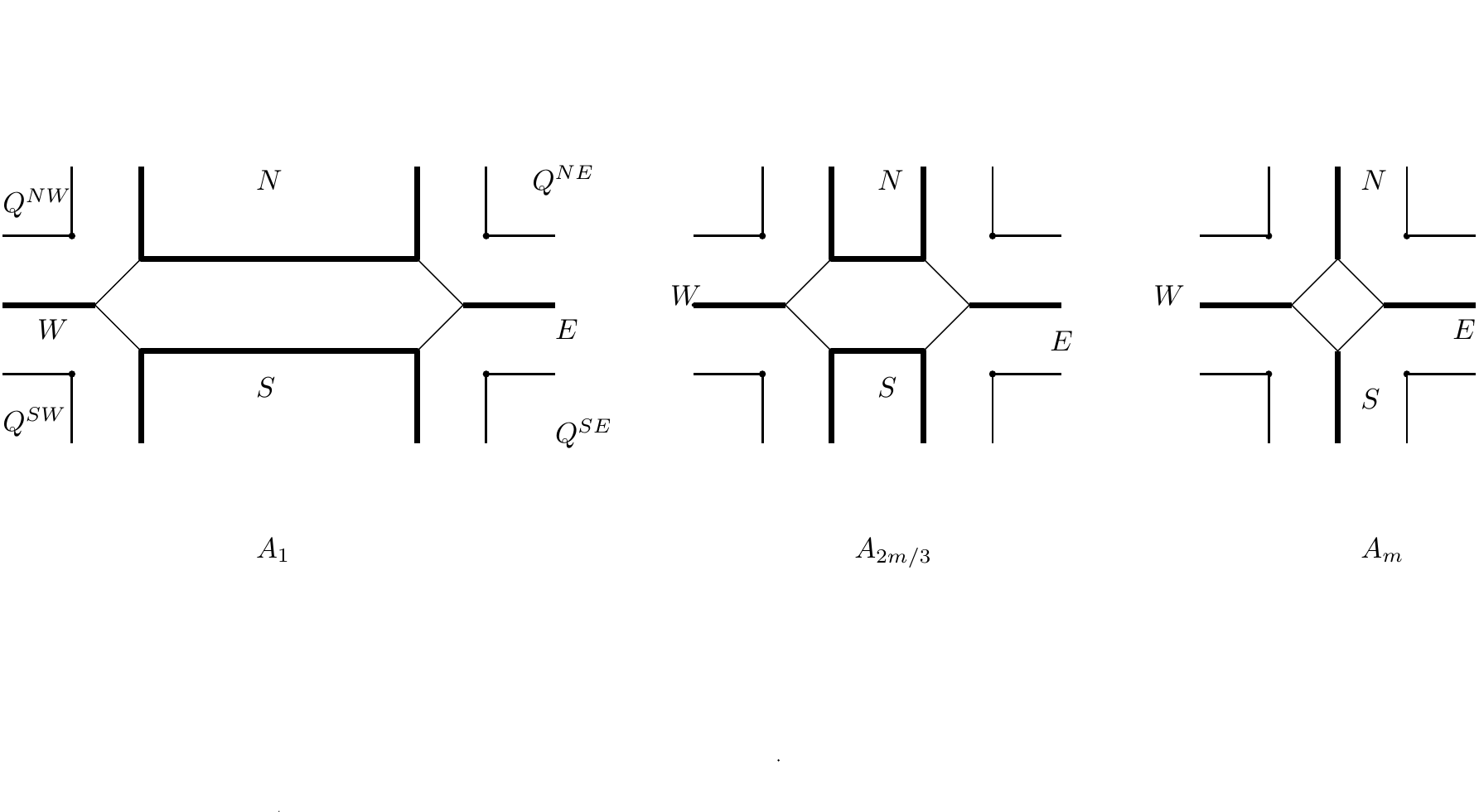}
\caption{Some $A_i$ from phase 1}
\label{fig:P1}
\end{figure}

\begin{figure}
\centering
\includegraphics[scale=0.8]{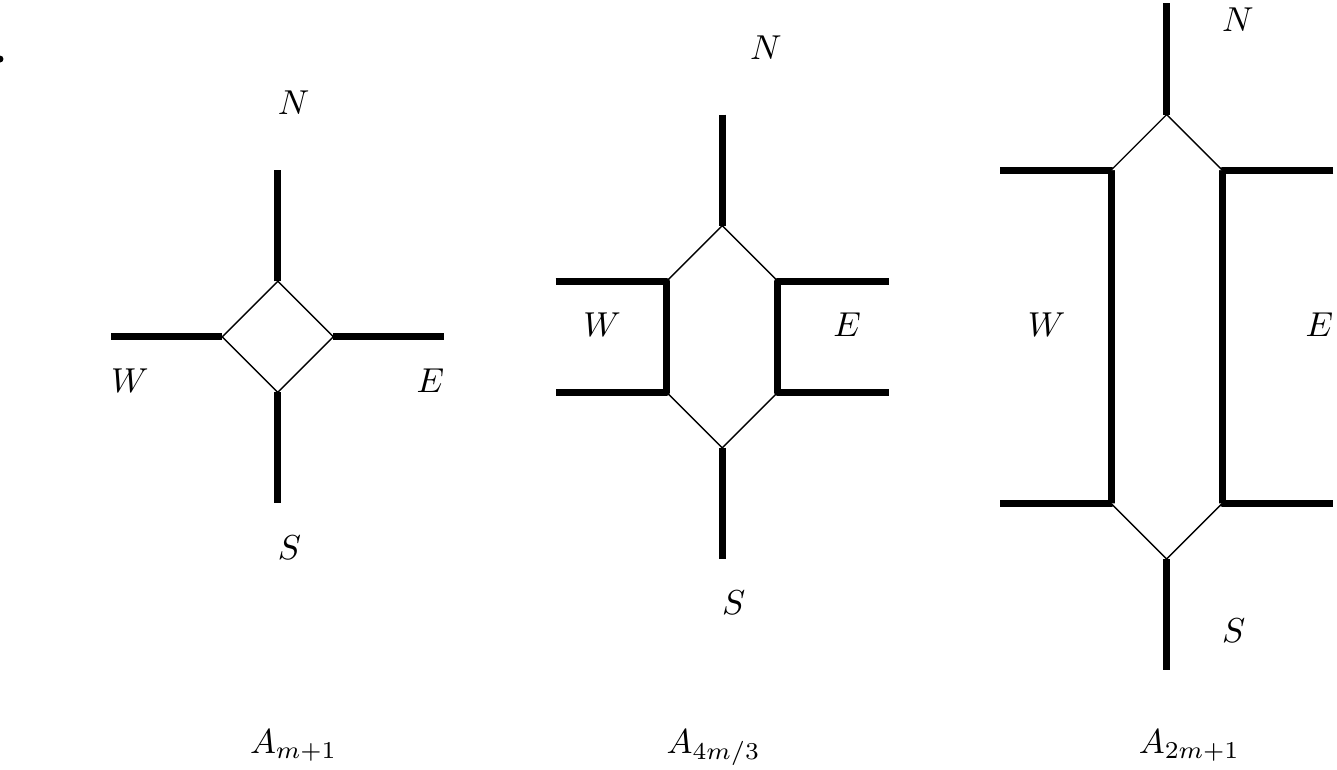}
\caption{Some $A_i$ from phase 2, corners suppressed}
\label{fig:P2}
\end{figure}

\begin{claim}
There is a transversal $T$ with $D(T) \le 4 \sqrt{2 - \sqrt 3}$.
\label{cl:opt}
\end{claim}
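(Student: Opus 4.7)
The plan is to exhibit an explicit transversal. Guided by the informal description -- the optimal transversal avoids corners and visits each of $N,S,E,W$ in at most one contiguous block -- I will keep $a_i \in N_i$ throughout phase~1 (along the line $y=1$, shrinking to $a_m=(0,1)$) and $a_i$ in the reflected north (the east strip along $x=1$) throughout phase~2 (from $a_{m+1}=(1,0)$ to $a_{2m}=(1,\alpha)$), for a parameter $\alpha\in(0,3)$ to be tuned. Concretely, with $a_0=a_{2m+1}=(0,0)$, I define
\[
a_i = \left(\frac{(m-i)\,\alpha}{m-1},\,1\right) \text{ for } 1\le i\le m,\qquad a_{m+j} = \left(1,\,\frac{(j-1)\,\alpha}{m-1}\right) \text{ for } 1\le j\le m.
\]

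The first step is to verify $a_i \in A_i$. In phase~1 this reduces to $|x_i|=(m-i)|\alpha|/(m-1)\le 3(m-i)/(m-1)=3-(i-1)\delta$, i.e.\ $|\alpha|\le 3$, together with $y_i=1\ge 1$; phase~2 is symmetric. Next I compute the $2m+1$ differences $z_i=a_i-a_{i-1}$. Three ``big'' vectors appear,
\[
z_1=(\alpha,1),\qquad z_{m+1}=(1,-1),\qquad z_{2m+1}=(-1,-\alpha),
\]
and the remaining $2(m-1)$ differences are copies of two ``small'' vectors $(-\alpha/(m-1),0)$ and $(0,\alpha/(m-1))$, each of norm $|\alpha|/(m-1)$.

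The diameter of $Z=\{z_i\}$ is therefore controlled by the pairwise distances among the three big vectors:
\[
|z_1-z_{m+1}|^2=(\alpha-1)^2+4,\quad |z_1-z_{2m+1}|^2=2(\alpha+1)^2,\quad |z_{m+1}-z_{2m+1}|^2=4+(\alpha-1)^2.
\]
Equating the first with the second yields $\alpha^2+6\alpha-3=0$, whose positive root $\alpha=2\sqrt{3}-3\in(0,3)$ makes all three distances equal to $\sqrt{32-16\sqrt{3}}=4\sqrt{2-\sqrt{3}}$.

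The remaining check -- that every pairwise distance involving at least one small vector is also at most $4\sqrt{2-\sqrt{3}}$ -- is routine. The worst such case occurs at $m=2$ and contributes $\sqrt{4\alpha^2+1}=\sqrt{85-48\sqrt{3}}<\sqrt{32-16\sqrt{3}}$, equivalently $53<32\sqrt{3}$, which holds. The main obstacle is really the optimization that identifies $\alpha=2\sqrt 3-3$; the symmetry of the construction and the elementary inequalities make everything else straightforward, and one obtains $D(T)=4\sqrt{2-\sqrt{3}}$ for every $m\ge 2$.
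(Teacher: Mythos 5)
Your transversal with $\alpha=2\sqrt{3}-3$ is exactly the one the paper exhibits, and the three ``big'' vectors $z_1$, $z_{m+1}$, $z_{2m+1}$ and the resulting diameter $4\sqrt{2-\sqrt{3}}$ match; the paper simply states the choice and asserts that $Z$ is the equilateral triangle on those vertices, while you recover $\alpha$ by equating two side lengths, so this is essentially the same proof. One small slip in the ``small vector'' check: at $m=2$ the largest such distance is not $\sqrt{4\alpha^2+1}$ (which is $|z_1-z_2|$) but $\sqrt{(1+\alpha)^2+1}=|z_2-z_{m+1}|\approx 1.77$; it is still below $4\sqrt{2-\sqrt{3}}\approx 2.07$, so the conclusion is unaffected.
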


\noindent{\bf Proof.}
Consider the transversal $T=\{ a_0, \ldots , a_{2m+1}\}$ where
\[
a_i=\left( \frac{m-i}{m-1}\left(2\sqrt{3}-3\right),1\right), \quad
a_{m+i}=\left(1, \frac{i-1}{m-1}\left(2\sqrt{3}-3\right)\right)
\]
for $i=1,\dots,m$. Then $Z$ is an equilateral triangle whose vertices are
\[
z_1=\left(2\sqrt{3}-3,1\right),\quad z_{m+1}=(1,-1),\quad
z_{2m+1}=\left(-1,3-2\sqrt{3}\right),
\]
and $\diam Z=4\sqrt{2-\sqrt 3}$.
\qed

\medskip
Fix an optimal transversal $T=\{ a_0, \ldots , a_{2m+1}\}$,
such a transversal exists by compactness. Write $z_i=(x_i,y_i)$.
As $0\in Z$, the above claim implies $||z_i||\le 4\sqrt{2-\sqrt 3}$.

We say that $a_j$ {\sl jumps} if $a_{j-1}$ is in one type of
component in $A_{j-1}$ but $a_j$ is in another type in $A_j$.
Then $z_j$ is called the corresponting {\sl jump}. For instance
$a_j$ jumps if $a_{j-1} \in Q_{j-1}^{NW}$ but $a_j$ is in $N_j$
or in $W_j$.
The important property is that $||z_j||$ is large when $z_j$ is a
jump. Therefore the structure of the sequence of jumps is rather restricted.

Assume by symmetry that $a_1\in N_1$. Then $z_1=a_1$
and $y_1\ge 1$.
Similarly, we may assume that $a_{2m}\in E_{2m}$.
Then $z_n=-a_{n-1}$ and $x_n\le -1$.
\smallskip

{\bf Fact 1.} For every $i=1,\ldots,2m+1$, $y_i\ge -1.1$ and $x_i\le 1.1$.
Proof: Otherwise
the $y$ component of $z_1-z_i$ or the $x$-component of $z_i-z_n$
is larger than $2.1$ and then $\diam Z > 2.1$, which contradicts
Claim \ref{cl:opt} and the optimality of $T$.
\qed

\smallskip
This implies that the jumps $Q^{NW} \to N,\; N \to Q^{NE},\; Q^{SW} \to S,\;
S \to Q^{SE}$ and $W \to E$ are forbidden, and so are the jumps
$Q^{NW} \to W,\; W \to Q^{SW},\; Q^{NE} \to E,\;E \to Q^{SE}$ and $N \to S$,
see Figure~\ref{fig:forbid}. In particular, $Q^{SE}$ and $Q^{NW}$ is never
visited by $T$, because all other components $T$ could jump from $Q^{NW}$
into or from which $T$ could jump into $Q^{SE}$ are too far away.
Moreover, as indicated  on Figure~\ref{fig:forbid}, there cannot be
$E \to W$ or $S \to N$ jumps either. Indeed, suppose that there is a jump
from some
$E_i$ to $W_{i+1}$. To get back to the point $a_{2m}\in E_{2m}$ there must be
another jump to $E_j$ for some $j>i+1$. But then $x_{i+1}\le -2$ and $x_j\ge
1-\delta$, yielding $||z_j-z_{i+1}||\ge 3-\delta$, a contradiction. A similar
argument applies to an $S \to W$ jump.
\begin{figure}
\centering
\includegraphics[scale=0.8]{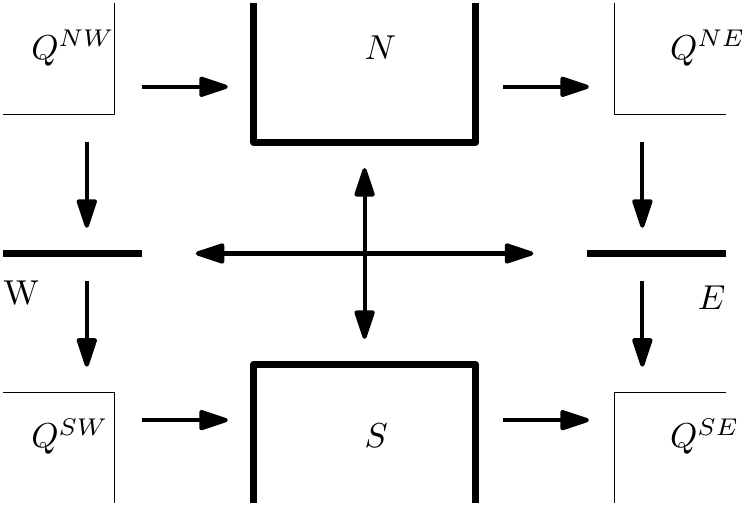}
\caption{Forbidden jumps}
\label{fig:forbid}
\end{figure}

\medskip

We say that a pair of jumps is {\sl  opposite} if either (a) one is $N \to E$ or $W \to S$ and the other is $E \to N$ or $S \to W$, or (b) one is $N \to W$ or $E \to S$ and the other is $W \to N$ or $S \to E$.

\smallskip

{\bf Fact 2.} There cannot be an  opposite pair of jumps if $m$ is large enough.
Proof: If $z_i$ and $z_j$ are  opposite jumps, then $|x_j-x_i|\ge
2-2\delta$ and $|y_j-y_i|\ge 2-2\delta$. Thus $||z_j-z_i||>2.8$ if $\delta$
is small enough, a contradiction.\qed

\begin{lemma}\label{l:nojump} There is a single jump and it goes
from $N_{j-1}$ to $E_j$ for some $j=2,\ldots,2m$.
\end{lemma}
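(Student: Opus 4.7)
\textit{Proof plan.} The plan is to classify every jump of $T$ by the quadrant of its jump vector $z$, apply Fact~2 to exclude opposite quadrants, and then do a short case analysis on the first jump out of $N_1$.

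Restricted to $\{N,S,E,W\}$, a direct sign computation partitions the eight surviving $90^\circ$ transitions exactly into the opposite pairs of Fact~2:
\begin{align*}
\text{QIV:\ } & \{N\to E,\ W\to S\}, & \text{QII:\ } & \{E\to N,\ S\to W\},\\
\text{QIII:\ } & \{N\to W,\ E\to S\}, & \text{QI:\ } & \{W\to N,\ S\to E\}.
\end{align*}
A similar coordinate check (after ruling out distance-forbidden corner entries and exits) shows that the only possible detours through $Q^{NE}$ or $Q^{SW}$ are $E\to Q^{NE}\to N$ and $S\to Q^{SW}\to W$, with $E\to Q^{NE}$ and $Q^{SW}\to W$ being upper-half jumps (QI or QII), $Q^{NE}\to N$ and $S\to Q^{SW}$ being left-half jumps (QII or QIII), and $Q^{SW}\to S$ being right-half (QI or QIV).

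The first jump of $T$ is $N\to E$ (QIV) or $N\to W$ (QIII). If it is $N\to W$, Fact~2 bars QI, so the next jump must be $W\to S$ (QIV), which then bars QII; from $S$ every outgoing edge is blocked ($S\to W$ is QII, $S\to E$ is QI, and $S\to Q^{SW}$ can only be followed by the banned $Q^{SW}\to W$ or the looping $Q^{SW}\to S$ forced to QIV), so $T$ cannot reach $E$, a contradiction. If the first jump is $N\to E$, Fact~2 bars QII; any second jump is then either $E\to S$ (QIII) or $E\to Q^{NE}$. The latter, being upper-half and not QII, must be QI, which forces the exit $Q^{NE}\to N$ (left-half, not QII) to be QIII; but QI and QIII are opposite (pair (b)), a contradiction. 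So the continuation would be $E\to S$ (QIII); from $S$ every option is again blocked by the same analysis. Hence no second jump exists and the single jump is $N\to E$, as claimed.

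The main obstacle is the corner bookkeeping in the second paragraph: computing the quadrant of each detour-jump from the regions' coordinates is routine but fiddly, and without it the Fact~2 case analysis does not close.
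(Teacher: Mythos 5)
Your high-level strategy matches the paper's: both proofs use Fact~1 to prune the jump graph and Fact~2 to forbid opposite pairs, then argue that starting from $N_1$ and ending in $E_{2m}$ forces a single $N\to E$ jump. Your quadrant labelling of the eight $N,S,E,W$ transitions is correct and is exactly the pairing (a)$\leftrightarrow$QIV/QII, (b)$\leftrightarrow$QIII/QI implicit in the paper's statement of Fact~2.

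However, the corner bookkeeping that you yourself flag as the main obstacle does contain a genuine error, and it does not close as written. You list $Q^{SW}\to W$ as ``banned'' and $Q^{SW}\to S$ as merely ``looping.'' This is reversed: the paper's Fact~1 argument forbids $W\to Q^{SW}$ and $Q^{SW}\to S$, while $S\to Q^{SW}$ and $Q^{SW}\to W$ are a priori permitted. Your conclusion (that a visit to $Q^{SW}$ cannot break out of the $S/W$ region) is still true, but for the opposite reasons to those you give, so the step as stated is wrong rather than merely fiddly. More substantively, Fact~2 as proved in the paper concerns only the eight $N,S,E,W$ transitions, where both components of the jump vector are tightly pinned (e.g.\ $x_j\ge1-\delta$, $y_j\le-1$ for $N\to E$). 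The corner jumps do not have the same two-sided control: for instance $E\to Q^{NE}$ has $y_k\ge1.5$ but essentially no lower bound on $x_k$, and $Q^{NE}\to N$ has $x\le-1.5+\delta$ but $y$ of either sign. So ``upper-half and not QII, hence QI'' and the subsequent ``QI vs.\ QIII opposite $\Rightarrow$ distance $>2.8$'' are not justified by Fact~2; the coordinate gaps can be well under $2-2\delta$, and one has to verify by hand that the resulting distance still exceeds $4\sqrt{2-\sqrt3}$. The paper avoids all of this by killing the $E\to Q^{NE}$ case in one line: the $y$-gap alone already gives $\|z_k-z_j\|>|y_k-y_j|\ge2.5-\delta$, which contradicts Claim~\ref{cl:opt}. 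You would be better served adopting that direct computation than trying to force the corner jumps into the Fact~2 framework.
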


\noindent
{\bf Proof.} There must be a jump since $a_1 \in N_1$ and $a_{2m}\in E_{2m}$.
Assume that the first jump is $z_j$, that is $a_i \in N_i$ for
$i=1,\ldots,j-1$ but $a_j \notin N_j$. As we have seen, $a_j$ is either in
$E_j$ or in $W_j$, $Q_j^{SW}$ and $Q_j^{SE}$ too far away.

Suppose first that $a_j\in W_j$, we will see that it leads to a contradiction.
Note that $z_j$ is fairly large: $x_j\le -1+\delta$ and $y_j \le -1+\delta$.
Also, there must be a further jump, say $z_k$ meaning that $a_i \in W_i$ for
$i=j,\ldots,k-1$ but $a_k \notin W_k$.
So $a_k$ is in $S_k$ or in $N_k$ since $Q_k^{NE}$
is too far away if $m$ is large enough. Now $a_{2m} \in E_{2m}$ which
cannot be reached from $a_k$ without creating an  opposite pair:
along the way there is an $S \to E$ jump or a $N \to E$ one. A contradiction.
\smallskip

We can conclude that the first jump goes from $N_{j-1}$ to $E_j$.

\smallskip

If there is a further jump, then the next jump, $z_k$ say,
must go from $E_{k-1}$ to $Q_k^{NE}$ or $S_k$ or $N_k$, as $Q_k^{SW}$ is
too far away. Here $N_k$ is excluded as then $z_j,z_k$ are  opposite.
If $a_k \in Q_k^{NE}$, then $||z_k-z_j||> |y_k-y_j|\ge 2.5-\delta$.
Assume finally
that $a_k \in S_k$. Then, again, $a_{2m} \in E_{2m}$ cannot be reached
from $a_k \in S_k$ without using an  opposite pair: along the way there
is an $S \to E$ jump or an $W \to N$ one. \qed

\medskip
Let $z_j$ be the single jump along the way.
Note that $x_j\ge 1-\delta$ and $y_j \le -1+\delta$.
This leads to a simple minimisation problem.
Given points $z_1,z_j,z_n \in \R^2$, with
$y_1\ge 1$, $x_j \ge 1-\delta$, $y_j \le -1+\delta$ and $x_n\le -1$,
find the minimum diameter of the triangle formed by these points.
As $\delta$ tends to 0, the unique solution to this problem converges to the
equilateral triangle specified in the proof of Claim \ref{cl:opt}.
Thus, $D(T) > 4 \sqrt{2 - \sqrt{3}} - \varepsilon$ for $m \ge m_0(\varepsilon)$.
This completes the proof of Theorem \ref{th:2.07} for odd values of $n$.
For even values of $n$ some straightforward modifications are needed, which are
left to the reader.

\section{Sketch of the proof of Theorem \ref{th:2.41}}

In this construction  $m$ is a large integer again, and the grid-like
sequence of sets is $C_{-1},C_0,\ldots$, $C_{6m+1}$, $C_{6m+2}$, where
$C_{-1}=C_{6m+2}=\{(0,0)\}$. Further $C_0=A_1,C_{6m+1}=A_{2m}$, where
$A_1,A_{2m}$ come from the previous construction to guarantee that
$|y_0|\ge1$ and $|x_{6m+1}|\ge 1$ with the previous notation
$z_i=a_i-a_{i-1}=(x_i,y_i)$.

The main characters are a rhombus $R$ (instead of the square $X$) and the
same segments $G_i$. The length of the shorter diagonal of the rhombus is
$\sqrt 2$, its sides are of length $\sqrt{2+\sqrt 2}$, and its  centre is
the origin. But this time rotated copies of $R$ are needed, so let $R(\al)$
stand for the rotated copy of $R$; rotated by angle $\al$ in clockwise
direction around its centre.

\begin{figure}
\centering
\includegraphics[scale=0.8, trim={0cm 3cm  0 0}, clip=true]{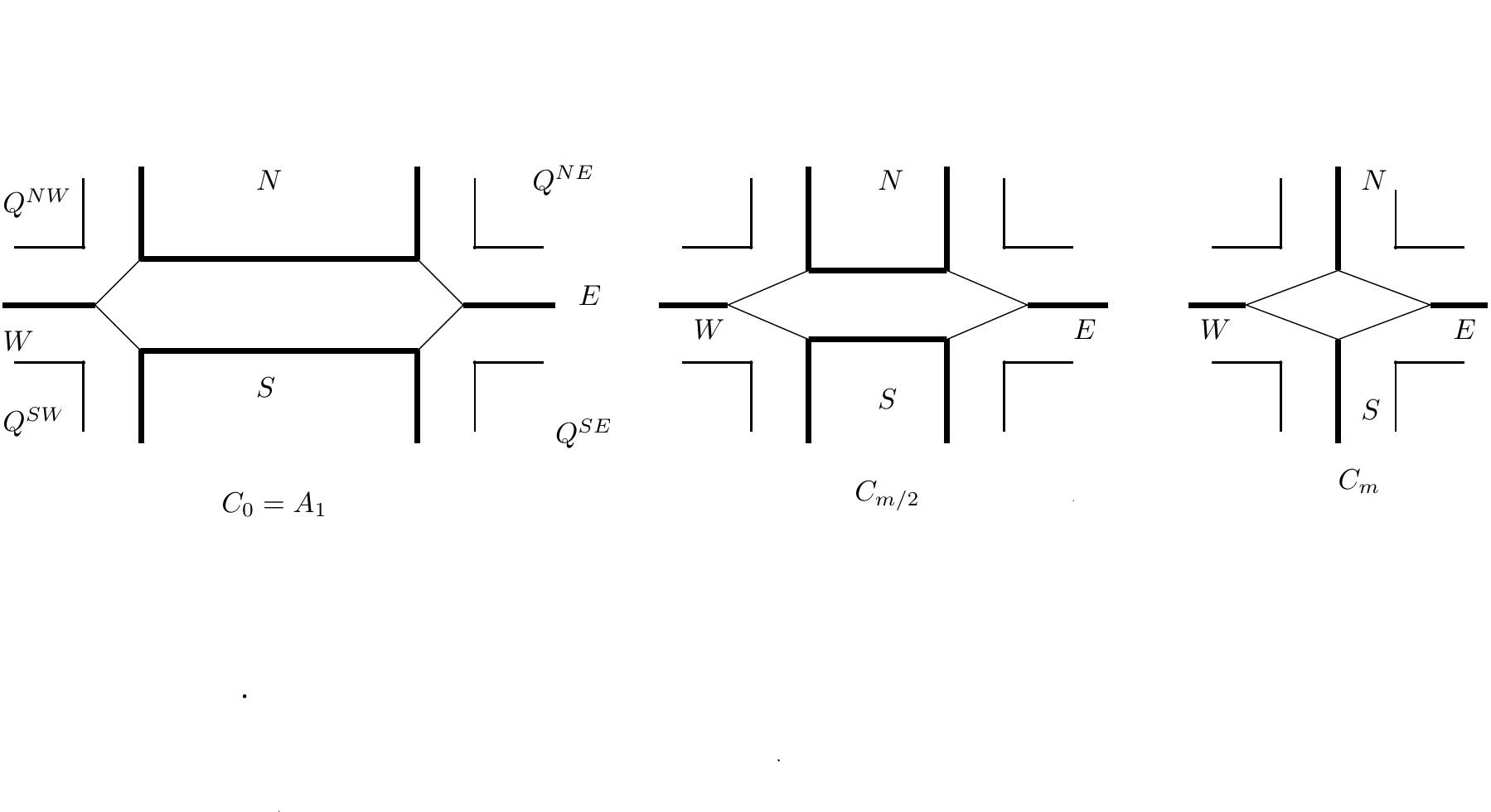}
\caption{$C_0,C_{m/2},C_m$, the rhombus $R$ is drawn with thin segments}
\label{fig:p1}
\end{figure}

\begin{figure}
\centering
\includegraphics[scale=0.8]{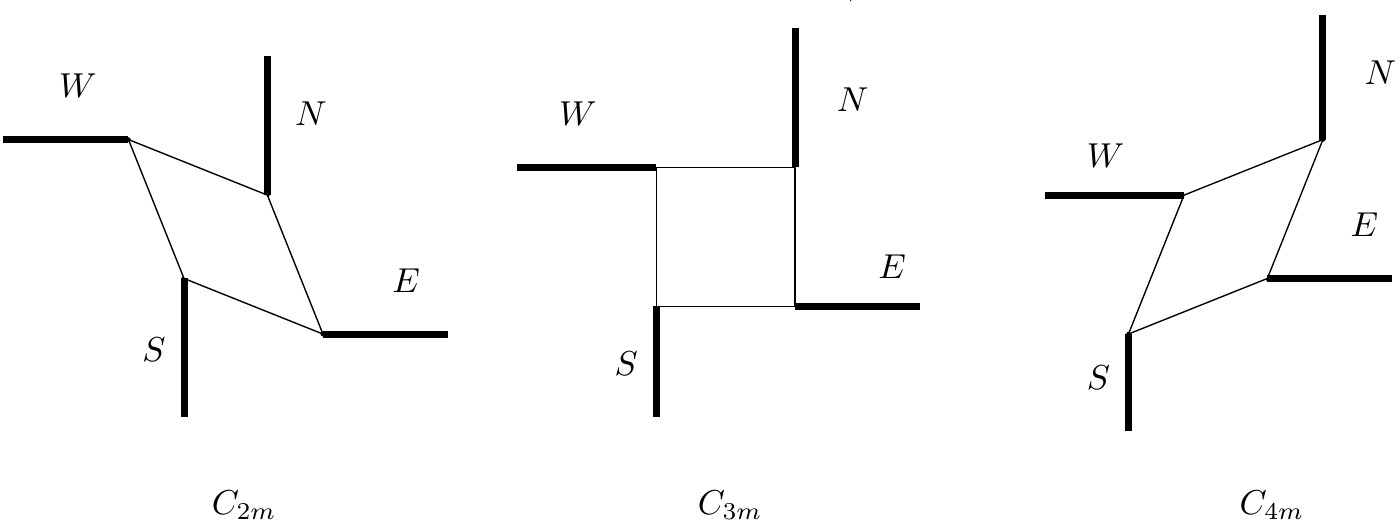}
\caption{Deformation of the rhombus from $C_{2m}$ to $C_{4m}$,
corners suppressed}
\label{fig:p2}
\end{figure}

The first $m$ sets $C_1,\ldots,C_m$ come from the Minkowski sum of $R$ and
the previous horizontal $G_i$: This sum is a hexagon again and the
components $N_i,S_i,E_i,W_i$ are just  extended from the horizontal and
vertical sides of this hexagon the same way as in Theorem~\ref{th:2.07}.
The corners are at the same distance from the horizontal and
vertical components as before. See Figure~\ref{fig:p1}.
The last $C_{5m+1}\ldots,C_{6m}$ sets come from the Minkowski sum of
$R(\pi/2)$ and the corresponding vertical segments $G_i$ analogously.

Note that $N_m,S_m,E_m,W_m$ are halflines, and they remain halflines
in all $C_i$ with $i \in \{m,\ldots,5m\}$. The sets $C_{m+1},\ldots, C_{2m}$
come from gradually rotated copies of $R$.
Then the rhombus $R(\pi/4)$ that defines $C_{2m}$ is
gradually deformed to a square (of side legth $\sqrt 2$) in $C_{3m}$, which
is further deformed to the rhombus $R(-\pi/4)$ in $C_{4m}$, see
Figure~\ref{fig:p2}. Then $R(-\pi/4)$ rotates back to $R(-\pi/2)=R(\pi/2)$
in $C_{5m}$.

The proof that this construction gives $D(T)> 1+\sqrt 2 -\varepsilon$
is based on ideas
similar to those used in Theorem~\ref{th:2.07}: First one shows that an
optimal transversal $T$ has no point in the corners, and second, that $T$
does not visit the same type component $N,S,E,W$ twice. We omit the details.

\medskip

{\bf Acknowledgements.}  IB was supported by ERC Advanced
Research Grant no 267165 (DISCONV) and by National Research,
Development and Innovation Office NKFIH Grants K 111827 and K 116769.
ECs was supported by ERC
grants 306493 and 648017 and by Marie Curie
Fellowship, grant No. 750857.
GyK was partially supported by bilateral research
grant T\'ET 12 MX--1--2013--0006.  GT was supported by the National Research,
Develpoment and Innovation Office NKFIH Grant K-111827.

\bigskip

\bigskip

\noindent
Imre B\'ar\'any \\
Alfr\'ed R\'enyi Institute of Mathematics,\\
Hungarian Academy of Sciences\\
13 Re\'altanoda Street Budapest 1053 Hungary\\
and\\
Department of Mathematics\\
University College London\\
Gower Street, London, WC1E 6BT, UK\\
{\tt barany.imre@renyi.mta.hu}\\

\medskip

\noindent
Endre Cs\'oka \\
Alfr\'ed R\'enyi Institute of Mathematics,\\
Hungarian Academy of Sciences\\
13 Re\'altanoda Street Budapest 1053 Hungary\\
{\tt csokaendre@gmail.com}\\

\medskip
\noindent
Gyula K\'arolyi \\
Alfr\'ed R\'enyi Institute of Mathematics,\\
Hungarian Academy of Sciences\\
13 Re\'altanoda Street Budapest 1053 Hungary\\
and\\
Institute of Mathematics\\
E\"otv\"os University\\
1/C P\'azm\'any P. s\'et\'any Budapest 1117 Hungary\\
{\tt karolyi.gyula@renyi.mta.hu}\\

\medskip
\noindent
G\'eza T\'oth \\
Alfr\'ed R\'enyi Institute of Mathematics,\\
Hungarian Academy of Sciences\\
13 Re\'altanoda Street Budapest 1053 Hungary\\
{\tt toth.geza@renyi.mta.hu}\\


\begin{thebibliography}{99}

\bibitem{BG} I. B\'ar\'any, V. S. Grinberg.  Block partitions of sequences,
{\it Israel J Math.}, {\bf  206} (2015), 155--164.

\bibitem{LPS} M. Lucertini, Y. Perl, B. Simeone. Most uniform path partitiong
and its use in image processing, {\it Discrete Applied Mathematics},
{\bf 42} (1993), 227--256.

\end{thebibliography}
\end{document}